\newtheorem{Thm}{Theorem} 
\newtheorem{Lem}[Thm]{Lemma}
\theoremstyle{definition}
\numberwithin{equation}{section}
\newenvironment{sm}{\bigl(\begin{smallmatrix}}{\end{smallmatrix}\bigr)}
\renewcommand{\phi}{\varphi}
\newcommand{\C}{\mathrm{C}}
\newcommand{\N}{\mathrm{N}}
\newcommand{\pcore}{\mathrm{O}}
\newcommand{\NN}{\mathbb{N}}
\newcommand{\FF}{\mathbb{F}}
\newcommand{\GL}{\operatorname{GL}}
\newcommand{\SL}{\operatorname{SL}}
\newcommand{\PSL}{\operatorname{PSL}}
\newcommand{\Syl}{\operatorname{Syl}}
\title{On redundant Sylow subgroups}
\author{Benjamin Sambale\footnote{Institut für Algebra, Zahlentheorie und Diskrete Mathematik, Leibniz Universität Hannover, Welfengarten 1, 30167 Hannover, Germany,
\href{mailto:sambale@math.uni-hannover.de}{sambale@math.uni-hannover.de}}}
\date{\today}
\begin{document}
\frenchspacing
\maketitle

\begin{abstract}\noindent
A Sylow $p$-subgroup $P$ of a finite group $G$ is called \emph{redundant} if every $p$-element of $G$ lies in a Sylow subgroup different from $P$. Generalizing a recent theorem of Maróti--Martínez--Moretó, we show that for every non-cyclic $p$-group $P$ there exists a solvable group $G$ such that $P$ is redundant in $G$. Moreover, we answer several open questions raised by Maróti--Martínez--Moretó. 
\end{abstract}

\textbf{Keywords:} Sylow subgroups, covering, $p$-elements\\
\textbf{AMS classification:} 20D20

\section{Introduction}

For a prime $p$, let $\nu_p(G)$ be the number of Sylow $p$-subgroups of a finite group $G$. Mikko Korhonen~\cite{Korhonen} has asked 10 years ago whether there exist groups $G$ such that the set of $p$-elements $G_p$ of $G$ can be covered by less than $\nu_p(G)$ Sylow $p$-subgroups, i.\,e. $G_p$ is the union of those Sylow subgroups. A positive answer was given by Jack Schmidt~\cite{Korhonen} using a group $G$ with elementary abelian Sylow $p$-subgroups. 
Most recently, Maróti--Martínez--Moretó~\cite{MMM} have called a Sylow $p$-subgroup $P$ of $G$ \emph{redundant} if $G_p$ is already covered by the Sylow subgroups different from $P$ (by Sylow's theorem, either all or none Sylow subgroups are redundant). Obviously, redundant Sylow subgroups must be non-cyclic. Generalizing Schmidt's construction, they showed in \cite[Theorem~A]{MMM} (using a deep theorem of Turull and the solvable case of Thompson's theorem) that for every non-cyclic $p$-group $P$ of exponent $p$ there exists a solvable group $G$ such that $P$ is redundant in $G$. 
They have further speculated on p.~483 that the restriction on the exponent of $P$ might be superfluous. 
In this paper, we show in \autoref{main} that this is indeed the case. Our construction is in fact much easier and does not depend on the theorems of Turull and Thompson mentioned above. Using a refined method in \autoref{refine}, we also provide examples where $\nu_p(G)$ only depends on $p$. In particular, we show that $\nu_2(G)=27$ is the smallest possible value for a group $G$ with a redundant Sylow $2$-subgroup. We also the determine the smallest value of $\nu_p(G)$ for $p$-solvable groups in \autoref{psolv}.
This is a contribution to \cite[Question~8.5]{MMM}.

In \cite[p.~846]{MMM}, the authors state that there are very few groups $G$ such that $|G_p|<\nu_p(G)$ and only examples with elementary abelian Sylow $p$-subgroups are known. Our construction yields such examples for every non-cyclic $p$-group $P$. This leads to a negative answer to \cite[Question~8.7]{MMM}. 
On the other hand, we provide a positive answer to \cite[Question~8.8]{MMM} in \autoref{Q88}.

\section{Results}

\begin{Thm}\label{main}
For every non-cyclic $p$-group $P$ and every prime $q\ne p$ there exists an elementary abelian $q$-group $N$ such that $P$ acts on $N$ and $G:=N\rtimes P$ has the following properties:
\begin{enumerate}[(i)]
\item\label{a} $P$ is redundant in $G$.
\item\label{b} $|G_p|\le\frac{1}{q^{p-1}}|G|$.
\item\label{c} $G_p$ is covered by $\frac{1}{q^{p-1}}\nu_p(G)$ Sylow $p$-subgroups.
\end{enumerate}
\end{Thm}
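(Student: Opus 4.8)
The plan is to work throughout in the language of the $\FF_q[P]$-module $N$ and to translate every statement about $G=N\rtimes P$ into linear algebra. Writing elements of $G$ as pairs $(m,x)$ with $m\in N$, $x\in P$ and multiplication $(m_1,x_1)(m_2,x_2)=(m_1+x_1m_2,x_1x_2)$, one checks that the Sylow $p$-subgroups of $G$ are exactly the conjugates $nPn^{-1}=\{((1-x)n,x):x\in P\}$ for $n\in N$, that two of these agree iff their parameters differ by an element of $N^P=\C_N(P)$, and hence that $\nu_p(G)=[N:\C_N(P)]$. Every $p$-element is conjugate into $P$, so $G_p=\{(m,x):x\in P,\ m\in(1-x)N\}$ and therefore $|G_p|=\sum_{x\in P}[N:\C_N(x)]$; moreover $nPn^{-1}$ contains $(m,x)$ iff $(1-x)n=m$. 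First I would record these four facts as a short lemma-style computation.

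From the last fact the redundancy criterion falls out: the element $(m,x)$ lies in a Sylow subgroup other than $P$ unless $m=0$ and the coset of solutions of $(1-x)n=0$, namely $\C_N(x)$, is contained in $\C_N(P)$. Hence \emph{$P$ is redundant iff $\C_N(x)\supsetneq\C_N(P)$ for every $x\in P\setminus\{1\}$}, and this is the criterion I would aim to meet with $\C_N(P)=0$. For the construction I would reduce to rank $2$. Since $P$ is non-cyclic there is a surjection $\rho\colon P\twoheadrightarrow\bar P:=C_p\times C_p$; let $\bar M_0,\dots,\bar M_p$ be its $p+1$ maximal subgroups. For each $i$ let $N_i$ be the augmentation module (the sum-zero submodule of the permutation module) of $\FF_q[\bar P/\bar M_i]\cong\FF_q[C_p]$, a space of dimension $p-1$ with $N_i^{\bar P}=0$ because $p\ne0$ in $\FF_q$; inflate it along $\rho$ and set $N:=\bigoplus_{i=0}^{p}N_i$.

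Because every nonzero $v\in\bar P$ lies in exactly one $\bar M_i$, one gets $\C_N(x)=N_{i(v)}\neq 0$ for $\rho(x)=v\ne0$ (and $\C_N(x)=N$ when $\rho(x)=0$), while $\C_N(P)=N^{\bar P}=0$; the criterion above then yields~\eqref{a}. Here $\nu_p(G)=|N|=q^{(p+1)(p-1)}$, and splitting the sum for $|G_p|$ according to $\rho(x)$ gives $|G_p|=\tfrac{|P|}{p^2}\bigl(1+(p^2-1)q^{p(p-1)}\bigr)<|P|\,q^{p(p-1)}$, which is smaller than $\nu_p(G)=q^{p-1}\cdot q^{p(p-1)}$ as soon as $q^{p-1}>|P|$, giving~\eqref{b}. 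For~\eqref{c} I would exhibit an explicit covering: for a fixed $i$ the subspace $S_i:=\bigoplus_{j\ne i}N_j$ has $|S_i|=q^{p(p-1)}$, and since $(1-v)$ acts bijectively on each $N_j$ with $j\ne i$ whenever $v\notin\bar M_j$, the Sylow subgroups $\{nPn^{-1}:n\in S_i\}$ cover every $p$-element $(m,x)$ with $\rho(x)\in\bar M_i$. As each $v\in\bar P$ lies in some $\bar M_i$, the union $\bigcup_{i=0}^{p}S_i$ covers all of $G_p$ using at most $(p+1)q^{p(p-1)}=\tfrac{p+1}{q^{p-1}}\nu_p(G)$ Sylow subgroups.

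The main obstacle I anticipate is not any single hard step but getting the bookkeeping in the first paragraph exactly right—in particular tracking when $(1-x)$ is invertible on a given summand and verifying that distinct parameters $n$ really do yield distinct Sylow subgroups (i.e.\ that $\C_N(P)=0$), since every later count rests on those identifications. The reduction to $\bar P=C_p\times C_p$ via $\rho$ is the device that makes all three quantities $\nu_p(G)$, the shape of $G_p$, and the covering number depend only on the rank-$2$ action (with $|G_p|$ merely scaled by $|\Ker\rho|=|P|/p^2$), and it is what keeps the factor $p+1$ independent of $P$.
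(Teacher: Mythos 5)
Your proof is correct, but it takes a genuinely different route from the paper's. Both arguments build $G=N\rtimes P$ with $N$ an elementary abelian $q$-group and both use the covering of $P$ by $p+1$ maximal subgroups for part~(iii), but the modules differ: the paper takes $N=V/Z$ where $V$ is the regular $\FF_qP$-module and $Z$ its trivial submodule (so $\dim N=|P|-1$ and $\nu_p(G)=q^{|P|-1}$), whereas you inflate the $p+1$ augmentation modules of $\FF_q[\bar P/\bar M_i]$ along a surjection $P\twoheadrightarrow C_p\times C_p$ (so $\dim N=p^2-1$ and $\nu_p(G)=q^{p^2-1}$, independent of $|P|$). Your construction is in fact closer to the paper's \autoref{refine}, which uses $p+1$ one-dimensional modules with kernels $P_i$ but needs $q\equiv 1\pmod p$; replacing the linear character by the full $(p-1)$-dimensional augmentation module lets you drop that congruence condition, and your bound interpolates between the two theorems. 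The bookkeeping also differs: you identify $G_p$ with $\{(m,x):m\in(1-x)N\}$ and compute $|G_p|=\sum_{x\in P}|N:\C_N(x)|$ exactly (all steps here check out: $\Ker(1+x+\dots+x^{d-1})=(1-x)N$ by Maschke, every nonzero $v\in C_p\times C_p$ lies in exactly one $\bar M_i$, and $(1-x)$ is bijective on $N_j$ for $v\notin\bar M_j$), while the paper sums $|G:\C_G(x)|$ over conjugacy-class representatives and only bounds $|\C_N(x)|$ from below via coset sums $w_C$. Each approach has its merits: the paper's lower bound $|\C_N(x)|\ge q^{|P:\langle x\rangle|-1}$ carries more information for elements of small order, while your exact count and the smaller, $|P|$-independent module give cleaner constants and a stronger existence statement. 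The one point worth making explicit in a write-up is that redundancy must be checked for all $p$-elements of $G$, not just those of $P$; this is immediate since any $p$-element outside $P$ already lies in a Sylow subgroup different from $P$, but it deserves a sentence.
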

\begin{proof}\hfill
\begin{enumerate}[(i)]
\item Let $V$ be the regular $\FF_qP$-module with basis $B:=\{v_x:x\in P\}$. Then $P$ acts trivial on $Z:=\langle\prod_{x\in P}v_x\rangle$. Let $N:=V/Z\cong C_q^{|P|-1}$ and $G:=N\rtimes P$. Since $P$ acts transitively on $B$, it follows that $\C_N(P)=1$ and $\N_G(P)=P$. Let $x\in P$.
Since $P$ is not cyclic, 
\[w:=\prod_{c\in\langle x\rangle}v_cZ\in\C_N(x)\setminus\{1\}.\] 
Hence, $x=wxw^{-1}\in wPw^{-1}\in\Syl_p(G)\setminus\{P\}$. This shows that $P$ is redundant in $G$.

\item Let $R\subseteq P$ be a set of representatives for the conjugacy classes of $P$. By construction, every $p$-element of $G$ is conjugate to a unique element $x\in R$. Let $g\in\C_G(x)$ and write $g=ny$ with $n\in N$ and $y\in P$. Then $xy\equiv xg\equiv gx\equiv yx\pmod{N}$ and therefore $[x,y]\in P\cap N=1$. This shows that $y\in\C_P(x)$, $n\in\C_N(x)$ and $\C_G(x)=\C_N(x)\C_P(x)$.
Every right coset $C$ of $\langle x\rangle$ in $P$ determines an element $w_C:=\prod_{c\in C}v_c\in\C_V(x)$. It is easy to check that the elements $\{w_C:C\in\langle x\rangle\backslash P\}$ form a basis of $\C_V(x)$. This yields 
\[|\C_N(x)|=|\C_V(x)/Z|= q^{|P:\langle x\rangle|-1}\ge q^{p-1}.\]
Hence,
\[|G_p|=\sum_{x\in R}|G:\C_G(x)|=\sum_{x\in R}|P:\C_P(x)||N:\C_N(x)|\le \frac{|N|}{q^{p-1}}\sum_{x\in R}|P:\C_P(x)|=\frac{1}{q^{p-1}}|G|.\]

\item Since $P$ is non-cyclic, there exist maximal subgroups $P_1,\ldots,P_{p+1}\le P$ such that $P=P_1\cup\ldots\cup P_{p+1}$. 
Then $N_i:=\C_N(P_i)\cong C_q^{p-1}$ for $i=1,\ldots,p+1$ by the argument of \eqref{b}. Since $P_j\unlhd P$, each $P_i$ acts on $N_j$. For $i\ne j$, we have $N_i\cap N_j=\C_N(\langle P_i,P_j\rangle)=\C_N(P)=1$. By the Fitting decomposition (see \cite[Theorem~4.34]{IsaacsGroup}), we obtain 
\[N_j=\C_{N_j}(P_i)\times[P_i,N_j]=[P_i,N_j]\le[P_i,N].\] 
Since $N=N_i\times [P_i,N]$, it follows that 
\[N_i\cap \prod_{j\ne i}N_j\le N_i\cap[P_i,N]=1.\] 
Therefore, $N_1\times\ldots\times N_{p+1}\le N$. We choose a basis $b_{i,1},\ldots,b_{i,p-1}$ of $N_i$ for every $i=1,\ldots,p+1$. Then the elements $b_{i,j}$ are linearly independent and can be extended to a basis $B$ of $N$. For $w\in N$ and $b\in B$ let $w_b$ be the coefficient of $w$ with respect to $b$. Define
\[T:=\Bigl\{w\in N:\forall j=1,\ldots,p-1:\sum_{i=1}^{p+1}w_{b_{i,j}}\equiv 0\pmod{q}\Bigr\}.\]
Then $|T|=\frac{1}{q^{p-1}}|N|$. Let $n\in N$ and $x\in P$ be arbitrary. There exist $i$ and $t\in T$ such that $x\in P_i$ and $t_b=n_b$ for all $b\in B\setminus\{b_{i,1},\ldots,b_{i,p-1}\}$. It follows that $t^{-1}n\in N_i\le\C_N(x)$ and $nxn^{-1}=txt^{-1}\in tPt^{-1}$. Hence, $G_p$ is covered by $\{tPt^{-1}:t\in T\}$. 
\qedhere
\end{enumerate}
\end{proof}

If $q^{p-1}>|P|$ in the situation of \autoref{main}, then $|G_p|<|N|=|G:\N_G(P)|=\nu_p(G)$ by \eqref{b}. 
If $p$ or $q$ goes to infinity, \eqref{c} furnishes a counterexample to \cite[Question~8.7]{MMM}. At the same time, it provides some evidence for \cite[Question~8.6]{MMM}. If $P$ contains a cyclic subgroup of index $p$, one can show that $G_p$ cannot be covered by less than $\frac{1}{q^{p-1}}\nu_p(G)$ Sylow subgroups.

If $P$ is the Klein four-group and $q=3$, then the construction of the proof above yields the group $G\cong\mathtt{SmallGroup}(108,40)$ with $\nu_2(G)=27$, which was mentioned in \cite[Introduction]{MMM}.
Question~8.5 of \cite{MMM} asks for the smallest possible value of $\nu_p(G)$ when $G$ has a redundant Sylow $p$-subgroup. Our proof of \autoref{main} yields $\nu_p(G)=|N|=q^{|P|-1}$. We give a better bound, which only depends on $p$. 

\begin{Thm}\label{refine}
For every non-cyclic $p$-group $P$ there exists a solvable group $G$ such that $P$ is redundant in $G$ and $\nu_p(G)=q^{p+1}$, where $q>1$ is the smallest prime power congruent to $1$ modulo $p$.
\end{Thm}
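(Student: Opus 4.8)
The plan is to keep the semidirect-product construction $G=N\rtimes P$ of \autoref{main} but to feed it a far smaller module. First I would isolate the criterion implicit in that proof: if $q\ne p$ and $N$ is any $\FF_qP$-module, then the Sylow $p$-subgroups of $G=N\rtimes P$ are exactly the conjugates $nPn^{-1}$ with $n\in N$, two of which coincide iff they differ by an element of $\C_N(P)$, while $x\in P$ lies in $nPn^{-1}$ iff $n\in\C_N(x)$. Hence, as soon as $\C_N(P)=1$, we get $\N_G(P)=P$ and $\nu_p(G)=|N|$, and $P$ is redundant precisely when $\C_N(x)\ne1$ for every $x\in P\setminus\{1\}$. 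So the whole theorem reduces to constructing an $\FF_qP$-module $N$ with $\dim_{\FF_q}N=p+1$, $\C_N(P)=0$, and $\C_N(x)\ne0$ for all $1\ne x\in P$; then $G=N\rtimes P$ is solvable and satisfies $\nu_p(G)=q^{p+1}$.

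The combinatorial heart of the argument is to cover $P$ by exactly $p+1$ maximal subgroups, uniformly in the rank of $P$. Writing $\overline P:=P/\Phi(P)$, a non-cyclic $P$ gives an $\FF_p$-space of dimension $d\ge2$. I would fix a subspace $\overline M\le\overline P$ of codimension $2$: the hyperplanes of $\overline P$ containing $\overline M$ are the preimages of the $p+1$ lines of the plane $\overline P/\overline M\cong\FF_p^2$, and since $p+1$ lines cover $\FF_p^2$ their preimages cover $\overline P$. Pulling back to $P$ yields maximal subgroups $M_1,\dots,M_{p+1}$ (each of index $p$) with $P=M_1\cup\dots\cup M_{p+1}$. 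This is the step I expect to be the main obstacle, since for $d>2$ a naive choice of $p+1$ maximal subgroups need not cover $P$; forcing them through a common codimension-$2$ subgroup is what makes the count independent of $d$.

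Finally I would realise this covering inside a module. Because $q$ is a prime power with $q\equiv1\pmod p$, the group $\FF_q^\times$ contains an element of order $p$, so for each $i$ there is a nontrivial homomorphism $\chi_i\colon P\to\FF_q^\times$ with $\Ker\chi_i=M_i$. Let $N_i$ be the one-dimensional $\FF_qP$-module afforded by $\chi_i$ and set $N:=N_1\oplus\dots\oplus N_{p+1}$, so that $\dim_{\FF_q}N=p+1$. No $\chi_i$ is trivial, whence $\C_N(P)=0$; and any $1\ne x\in P$ lies in some $M_i=\Ker\chi_i$, so $N_i\subseteq\C_N(x)\ne0$. By the criterion above, $G=N\rtimes P$ is a solvable group in which $P$ is redundant and $\nu_p(G)=|N|=q^{p+1}$. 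The choice of $q$ as the smallest prime power (not merely the smallest prime) congruent to $1$ modulo $p$ is what optimises $|N|$, and I would close by noting that a smaller $\FF_q$ admits no nontrivial characters of $C_p$ at all.
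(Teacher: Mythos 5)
Your proposal is correct and follows essentially the same route as the paper: cover $P$ by $p+1$ maximal subgroups, realise each as the kernel of a nontrivial character $P\to\FF_q^\times$ (using $q\equiv 1\pmod p$), and take $N$ to be the direct sum of the corresponding $1$-dimensional modules, so that $\C_N(P)=0$ while $\C_N(x)\ne 0$ for every $x\in P$. The only difference is that you spell out two steps the paper leaves implicit — the redundancy/normalizer criterion extracted from the proof of Theorem~\ref{main}, and the codimension-$2$ Frattini-quotient argument showing that $p+1$ maximal subgroups suffice to cover a non-cyclic $p$-group — both of which are correct.
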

\begin{proof}
Since $P$ is non-cyclic, there exist maximal subgroups $P_1,\ldots,P_{p+1}\le P$ such that $P=P_1\cup\ldots\cup P_{p+1}$. Since $q\equiv 1\pmod{p}$, the finite field $\FF_q$ contains a primitive $p$-th root of unity. Hence, for $i=1,\ldots,p+1$ there exists a $1$-dimensional $\FF_qP$-module $N_i$ with kernel $P_i$. Define $N=N_1\oplus\ldots\oplus N_{p+1}$. Since every $x\in P$ lies in some $P_i$, it follows that $\C_N(x)>1=\C_N(P)$. Now by the proof of \autoref{main}\eqref{a} (or using \cite[Corollary~3.2]{MMM}), it follows that $P$ is redundant in $G:=N\rtimes P$ and $\nu_p(G)=|N|=q^{p+1}$ (we do not need that $P$ acts faithfully on $N$). 
\end{proof}

\autoref{refine} provides the following upper bounds for the minimal values of $\nu_p(G)$: 
\[
\begin{array}{c|cccccccccc}
p&2&3&5&7&11&13&17&19&23&29\\\toprule
\min\limits_{q\,\equiv\, 1\,(\text{mod }p)}q^{p+1}&3^3&2^8&11^6&2^{24}&23^{12}&3^{42}&103^{18}&191^{20}&47^{24}&59^{30}
\end{array}
\]

Now we work in the opposite direction by finding lower bounds on $\nu_p(G)$. 
We following result settles the case $p=2$.

\begin{Thm}\label{p2}
Let $G$ be a finite group with a redundant Sylow $2$-subgroup. Then $\nu_2(G)\ge 27$.
\end{Thm}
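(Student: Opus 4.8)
The plan is to show that if $G$ has a redundant Sylow $2$-subgroup, then $\nu_2(G)\ge 27$. Since redundant Sylow subgroups are non-cyclic, the Sylow $2$-subgroup $P$ has order at least $4$. I'll first reduce to a situation where I can compute. A natural first step is to replace $G$ by a minimal counterexample and exploit the structure such minimality forces. In particular, redundancy is preserved under passing to quotients by normal subgroups of odd order (since Sylow $2$-subgroups and $2$-elements are unaffected up to the Schur–Zassenhaus correspondence), and one should be able to reduce to the case where $\pcore_{2'}(G)$ is handled and $P$ acts in a controlled way, or even reduce to a primitive-type solvable configuration $N\rtimes P$ as in the constructions above.

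The key numerical input is that $\nu_2(G)=|G:\N_G(P)|$ and that this index is odd. So the strategy is to argue that redundancy forces $\nu_2(G)$ to be large by a covering/counting argument. Since $P$ is non-cyclic, $P$ is covered by its three maximal subgroups $P_1,P_2,P_3$ (the case $p=2$ gives exactly $p+1=3$ maximal subgroups when $P/\Phi(P)\cong C_2\times C_2$; larger Frattini quotients only help). Redundancy means every involution of $G$ (indeed every $2$-element) lies in some conjugate of $P$ other than $P$ itself. I would translate this into a statement about the permutation action of $G$ on $\Syl_2(G)$: the $2$-elements that fix the point $P$ must also fix some other point. Counting fixed points of $2$-elements on $\Syl_2(G)$ via Burnside/orbit-counting, together with the constraint that the number of Sylow subgroups is odd and $\equiv 1\pmod 2$ in the strong sense coming from normalizer growth, should force $\nu_2(G)$ past small values.

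The concrete heart of the argument is to rule out each candidate value of $\nu_2(G)$ below $27$. Since $\nu_2(G)\equiv 1\pmod 2$ is automatic but too weak, I expect one needs the sharper congruences: by a theorem going back to Frobenius and its refinements, $\nu_2(G)$ is congruent to $1$ modulo $2$, and more usefully one knows $\nu_2(G)\in\{1,3,5,7,\dots\}$ with strong restrictions from the embedding $G\hookrightarrow S_{\nu_2(G)}$ acting on Sylow subgroups. I would list the small admissible values and, for each, use the redundancy covering condition together with the fact that the three maximal subgroups $P_1,P_2,P_3$ of $P$ must each ``reach'' other Sylow subgroups; this links $\nu_2(G)$ to indices of the form $|N:\C_N(P_i)|$ as in \autoref{main}\eqref{c}, and the minimal nontrivial such index for $p=2$ is $q^{p-1}=q=3$ with $q\equiv 1\pmod 2$ forcing at least the value $3^3=27$ realized by $\mathtt{SmallGroup}(108,40)$.

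The hard part will be the case analysis eliminating the values $3,5,7,9,\dots,25$: one must show that no finite group with a redundant Sylow $2$-subgroup can have so few Sylow subgroups, which presumably requires combining the odd-order reduction, the covering inequality $\nu_2(G)\ge q^{p+1}$-type lower bounds from \autoref{refine} specialized to $p=2$, and a direct inspection (possibly computer-assisted) of the finitely many groups whose Sylow $2$-normalizer has small odd index. I expect the genuine obstacle is not the large-order constructions but proving that values like $9,15,21,25$ — which satisfy the naive parity constraint — cannot be achieved, since this needs structural information about how non-cyclic $2$-groups can act to satisfy redundancy without the number of Sylow subgroups dropping below $27$.
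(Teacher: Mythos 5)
There is a genuine gap: your write-up is a plan, not a proof, and the step that actually carries the theorem is missing. You correctly identify the reduction the paper also uses --- pass to the quotient by the kernel of the conjugation action on $\Syl_2(G)$, so that $G$ becomes a transitive permutation group of odd degree $\nu_2(G)$, and then one ``only'' has to rule out the degrees $3,5,\ldots,25$. But you never rule them out. You gesture at three tools, none of which closes the gap. First, orbit-counting/congruence conditions on $\nu_2(G)$ are never made precise and cannot by themselves exclude values like $9$, $15$, $21$, $25$. Second, the inequality $\nu_p(G)\ge q^{p+1}$ that you invoke is the content of \autoref{psolv}, which is proved only for $p$-solvable groups (for $p=2$ this means solvable groups, by Feit--Thompson) and relies on Casolo's theorem and Hartley--Turull; the statement you are proving is for \emph{arbitrary} finite groups, so the non-solvable transitive groups of odd degree $\le 25$ are exactly the cases this tool cannot touch. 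Third, your remark that ``the minimal nontrivial such index is $q^{p-1}=3$, forcing $3^3=27$'' is an observation about the specific construction of \autoref{main}, not a lower bound valid for all groups; quoting the realizing example $\mathtt{SmallGroup}(108,40)$ shows $27$ is attained but says nothing about whether smaller values occur elsewhere.

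The paper resolves exactly this hard part by brute force: after the reduction to a transitive group of odd degree, it runs through the GAP database of all transitive groups of odd degree up to $25$, discards those whose point stabilizer does not have a normal Sylow $2$-subgroup, and tests redundancy in the remaining cases using \cite[Lemmas~2.1 and 2.6]{MMM}; no example with $\nu_2(G)<27$ survives. You anticipate that a ``possibly computer-assisted'' inspection might be needed, which is honest, but an anticipated computation is not a performed one, and without it (or a genuinely new theoretical argument covering the non-solvable cases) the theorem is not proved.
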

\begin{proof}
Let $N$ be the kernel of the conjugation action of $G$ on $\Syl_2(G)$, i.\,e. $N$ is the intersection of all Sylow normalizers.
Let $P\in\Syl_2(G)$. Since $P$ is the unique Sylow $2$-subgroup of $PN$, the map $\Syl_2(G)\to\Syl_2(G/N)$, $P\mapsto PN/N$ is a bijection and $P$ is redundant in $\Syl_2(G)$ if and only if $PN/N$ is redundant in $G/N$. Hence, we may assume that $N=1$. Then $G$ is a transitive permutation group of degree $\nu_2(G)$. We run through the database of all transitive groups of odd degree up to $25$ in GAP~\cite{GAPnew}. For each such group we can quickly check whether the stabilizer has a normal Sylow $2$-subgroup. If this is the case, we check whether $G$ has a redundant Sylow $2$-subgroup using \cite[Lemmas~2.1 and 2.6]{MMM}. It turns out that there are no examples with $\nu_2(G)<27$. 
\end{proof}

With the same method, we obtain $\nu_3(G)\ge 49$ and $\nu_5(G)\ge 51$ whenever $G$ has a redundant Sylow $p$-subgroup for $p=3$ or $p=5$ respectively.
The next lemma improves \cite[Theorem~8.4]{MMM} with an easier proof.

\begin{Lem}\label{lem}
Let $G$ be a finite group with a redundant Sylow $p$-subgroup. Then $\nu_p(G)\ge p^2+p+1$.
\end{Lem}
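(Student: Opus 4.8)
The plan is to show that a group with a redundant Sylow $p$-subgroup must have at least $p^2+p+1$ Sylow $p$-subgroups. The key structural input is the redundancy condition, which forces enough Sylow subgroups to collectively cover every $p$-element. My strategy is to fix a Sylow $p$-subgroup $P$, look at the Frattini quotient $\overline{P}=P/\Phi(P)$ (an elementary abelian $p$-group of rank at least $2$, since $P$ is non-cyclic), and exploit the fact that the maximal subgroups of $P$ correspond to hyperplanes of $\overline{P}$. Because $P$ is non-cyclic, it cannot be covered by fewer than $p+1$ of its maximal subgroups, and this counting of coverings by proper subgroups is exactly what will interact with the Sylow structure.

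First I would normalize the situation as in the previous proof: replace $G$ by $G/N$, where $N$ is the kernel of the conjugation action of $G$ on $\Syl_p(G)$, so that $G$ acts faithfully and transitively on the set $\Syl_p(G)$ of size $\nu_p(G)$. Then I would fix $P\in\Syl_p(G)$ and count incidences between $p$-elements and Sylow subgroups. The redundancy hypothesis says that every element of $P$ lies in some Sylow subgroup other than $P$; equivalently, for each non-identity $x\in P$ the stabilizer-type data gives a second Sylow subgroup containing $x$. The natural combinatorial object is the bipartite incidence structure whose points are the $\nu_p(G)$ Sylow subgroups and whose ``lines'' are the non-trivial cyclic (or maximal) subgroups, with incidence given by containment. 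The expected punchline is that this incidence structure behaves like a projective plane of order $p$, which has exactly $p^2+p+1$ points.

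The cleanest route is to argue that the Sylow subgroups containing a fixed maximal subgroup $Q$ of $P$, together with $P$ itself, form a small ``pencil,'' and that distinct maximal subgroups give rise to distinct such pencils whose overlaps are controlled. Concretely, I would count the Sylow subgroups meeting $P$ in a subgroup of index $p$: since $P$ has exactly $p+1$ maximal subgroups when $\overline{P}$ has rank $2$ (and at least $p+1$ in general), and since redundancy forces each maximal subgroup to be shared with at least one other Sylow subgroup, a double-counting argument over the $(p+1)$ maximal subgroups and the Sylow subgroups adjacent to $P$ should yield $\nu_p(G)-1\ge (p+1)\cdot p$, i.e. $\nu_p(G)\ge p^2+p+1$. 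The role of the number $p$ here is that each non-trivial coset direction in $\overline{P}$ can be ``continued'' into at most $p-1$ further Sylow subgroups before closing up, mimicking the $p+1$ points on a line of $\mathrm{PG}(2,p)$.

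The hard part will be making the incidence-counting rigorous without assuming more structure than redundancy provides, in particular ensuring that the maximal subgroups of $P$ really do contribute $p+1$ \emph{distinct} additional Sylow subgroups rather than overlapping. I expect the main obstacle to be ruling out excessive coincidences among the Sylow subgroups attached to different maximal subgroups; the projective-plane heuristic suggests the extremal configuration is a genuine $\mathrm{PG}(2,p)$, so I would either invoke the non-cyclicity to guarantee at least two independent directions in $\overline{P}$ and run a clean inclusion–exclusion, or reduce to the rank-$2$ Frattini quotient and argue that any covering of $\overline{P}$ by $p+1$ hyperplanes forces $p^2$ Sylow subgroups beyond $P$. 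Once the lower bound $\nu_p(G)-1\ge (p+1)p$ is established, the conclusion $\nu_p(G)\ge p^2+p+1$ follows immediately.
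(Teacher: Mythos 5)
Your proposal correctly isolates one half of the argument: since a non-cyclic $p$-group is not the union of $p$ proper subgroups, any covering of $P$ by other Sylow subgroups requires at least $p+1$ of them (this is exactly how the paper obtains the ``$p+1$'' in its minimal covering). But the remaining factor of $p$ is where your plan has a genuine gap, in two respects. First, redundancy is a statement about \emph{elements}: every $x\in P$ lies in some Sylow subgroup other than $P$, possibly a different one for each $x$. It does not say that any maximal subgroup of $P$ is contained in another Sylow subgroup, and the intersections $P\cap Q$ with $Q\in\Syl_p(G)\setminus\{P\}$ that actually occur need not be maximal in $P$ at all. So the dictionary between hyperplanes of $P/\Phi(P)$ and ``pencils'' of Sylow subgroups is not available, and the projective-plane heuristic (that each ``direction'' closes up after $p-1$ further Sylow subgroups) has no justification from the hypotheses; you acknowledge this yourself but do not supply a mechanism to replace it, so the proposed double count never produces the inequality $\nu_p(G)-1\ge(p+1)p$.

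The missing idea, which the paper uses, is a conjugation trick that manufactures $p$ distinct Sylow subgroups out of each member of a minimal covering. Take a covering $P\subseteq P_1\cup\ldots\cup P_k$ with $P_i\in\Syl_p(G)\setminus\{P\}$ and $k$ minimal; minimality forces the intersections $P\cap P_i$ to be pairwise distinct, and $k\ge p+1$ as above. Since $P\cap P_i<P$ and normalizers grow in $p$-groups, there is $g\in\N_P(P\cap P_i)\setminus P_i$; moreover $g\notin\N_G(P_i)$, since otherwise $P_i\langle g\rangle$ would be a $p$-subgroup properly containing the Sylow subgroup $P_i$. Hence the $\langle g\rangle$-orbit of $P_i$ has length a positive power of $p$, giving $p$ pairwise distinct conjugates $P_i^{g^j}$, each satisfying $P\cap P_i^{g^j}=(P\cap P_i)^{g^j}=P\cap P_i$ because $g$ normalizes both $P$ and $P\cap P_i$. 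The invariant $P\cap P_i$ separates these batches for different $i$ and shows none of them equals $P$, so one obtains $kp\ge(p+1)p$ Sylow subgroups different from $P$ and the bound $\nu_p(G)\ge p^2+p+1$ follows. Without this (or an equivalent) device, your argument only yields $\nu_p(G)\ge p+2$.
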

\begin{proof}
Let $P\in\Syl_p(G)$ be covered by $P_1,\ldots,P_k\in\Syl_p(G)\setminus\{P\}$ such that $k$ is as small as possible. 
Then $P\cap P_i\ne P\cap P_j$ for $i\ne j$. Since $P$ is not the union of $p$ proper subgroups, we must have $k\ge p+1$. 
Let $g\in\N_P(P\cap P_i)\setminus P_i$. Then $g\notin\N_G(P_i)$, since otherwise $P_i\langle g\rangle$ would be a $p$-subgroup larger than $P_i$. Hence, the Sylow subgroups $P_i^{g^j}$ for $j=1,\ldots,p$ are pairwise distinct and 
\[P\cap P_i^{g^j}=P^{g^j}\cap P_i^{g^j}=(P\cap P_i)^{g^j}=P\cap P_i.\]
In this way we obtain $kp$ Sylow $p$-subgroups different from $P$. Hence, $\nu_p(G)\ge kp+1\ge p^2+p+1$.
\end{proof}

\begin{Lem}
Let $G$ be a finite group with a redundant Sylow $p$-subgroup. Then $\nu_p(G)$ is not a prime.
\end{Lem}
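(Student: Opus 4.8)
The plan is to suppose $\nu_p(G)=\ell$ is prime and derive a contradiction. Since $\ell\equiv 1\pmod p$ by Sylow's theorem, we have $\ell>p$. As in the proof of the previous theorem, let $N$ be the intersection of all Sylow normalizers; replacing $G$ by $G/N$ we may assume that $G$ acts faithfully and transitively on $\Syl_p(G)$, so that $G$ becomes a transitive permutation group of prime degree $\ell$. In this reduction the point stabilizers are exactly the Sylow normalizers $\N_G(P)$, each having $P$ as a normal Sylow $p$-subgroup, and one checks that $\pcore_p(G)=\bigcap_{R\in\Syl_p(G)}R=1$. Moreover a Sylow $\ell$-subgroup is generated by an $\ell$-cycle $z$, so $z$ is fixed-point-free on $\Syl_p(G)$ and normalizes no Sylow $p$-subgroup.

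A clean uniform observation drives the argument: $z$ centralizes no nontrivial $p$-element. Indeed, if $1\ne x$ is a $p$-element with $z\in\C_G(x)$, then $z$ permutes the set of the $f(x)$ Sylow $p$-subgroups containing $x$; since $z$ has prime order $\ell$ and normalizes none of them, every orbit has length $\ell$, whence $\ell\mid f(x)$. As $1\le f(x)\le\nu_p(G)=\ell$ this forces $f(x)=\ell$, i.e. $x\in\bigcap_R R=\pcore_p(G)=1$, a contradiction. Now I invoke Burnside's theorem on transitive groups of prime degree, which splits the problem into two cases. If $G$ is solvable, then $G\le\mathrm{AGL}(1,\ell)$ and the point stabilizers are cyclic; hence $P\le\N_G(P)$ is cyclic, contradicting the fact that a redundant Sylow subgroup is non-cyclic. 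This disposes of the solvable case immediately.

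It remains to treat the case where $G$ is $2$-transitive with nonabelian simple socle, and this is where the real difficulty lies. Here $\N_G(P)$ acts transitively on $\Syl_p(G)\setminus\{P\}$, so all intersections $P\cap R$ with $R\ne P$ are $\N_G(P)$-conjugate and in particular of a common order $p^c$; by the criterion of \cite[Lemmas~2.1 and 2.6]{MMM}, redundancy is equivalent to every nontrivial $p$-element lying in at least two Sylow $p$-subgroups, which forces $c\ge 1$, so the Sylow $p$-subgroups are not a trivial-intersection set. One might hope to contradict this directly, but the example of the Klein four-group shows that a $p$-group can genuinely be covered by the images of a proper subgroup under a group of automorphisms, so no purely elementary contradiction is available; and $A_5$ (with $p=2$, $\ell=5$) shows that $2$-transitive groups of prime degree really do occur, the point being only that their Sylow $p$-subgroups fail to be redundant. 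The plan is therefore to appeal to the classification of transitive permutation groups of prime degree (Burnside, Galois, Feit and Guralnick): the candidates are $A_\ell\le G\le S_\ell$, the groups with $\PSL(d,r)\trianglelefteq G\le\mathrm{P}\Gamma\mathrm{L}(d,r)$ acting on the $\ell=(r^d-1)/(r-1)$ points of projective space, and the sporadic cases $\PSL(2,11)$, $M_{11}$ of degree $11$ and $M_{23}$ of degree $23$. For each of these one uses that $\N_G(P)$ must contain a full Sylow $p$-subgroup of $G$ as a normal subgroup to cut the list down to finitely many configurations, and then verifies, by exhibiting a $p$-element lying in a unique Sylow $p$-subgroup, that redundancy fails. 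The main obstacle is precisely this last case: while the reduction and the solvable case are elementary, ruling out the almost simple $2$-transitive groups seems to require the classification of $2$-transitive groups together with a finite, case-by-case non-redundancy check.
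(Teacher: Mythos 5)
Your reduction is sound and matches the paper's: pass to $G/N$ where $N$ is the kernel of the action on $\Syl_p(G)$, view $G$ as a transitive group of prime degree $\ell=\nu_p(G)$, and use Burnside's dichotomy; the affine case dies because the point stabilizer $\N_G(P)$, hence $P$, would be cyclic, contradicting redundancy. Your auxiliary observation that an $\ell$-cycle centralizes no nontrivial $p$-element is correct (and pleasant), but as written it is never actually deployed anywhere in the rest of your argument, so it does no work.

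The genuine gap is that you stop exactly where the proof becomes hard: you list the almost simple candidates and then say one ``verifies, by exhibiting a $p$-element lying in a unique Sylow $p$-subgroup, that redundancy fails,'' describing this as a ``finite, case-by-case non-redundancy check.'' It is not finite: the family $\PSL(n,r)$ of degree $\ell=(r^n-1)/(r-1)$ is infinite, and dismissing it requires an actual argument. The paper does the following. For $n>2$ the point stabilizer contains a copy of $\GL(n-1,r)$, whose order is divisible by $r\frac{r^{n-1}-1}{r-1}=\ell-1$; hence the stabilizer has a nontrivial Sylow $p$-subgroup that cannot be normal, ruling out $n>2$. For $n=2$ one gets $\ell=r+1$ a Fermat prime, forcing $p=2$ and $G=SP$, and then one exhibits the transvection $x=\begin{sm}1&1\\0&1\end{sm}$ with $\C_S(x)$ equal to the unipotent Sylow $2$-subgroup $Q$ of $S$, so $x$ lies in a unique Sylow $2$-subgroup of $S$; combining $\nu_p(S)=\nu_p(G)$ (which uses that $\ell$ is prime) with $\N_G(P)\le\N_G(Q)$ shows $P$ is the unique Sylow $2$-subgroup of $G$ containing $x$, contradicting redundancy. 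The remaining sporadic cases ($A_\ell$, $\PSL(2,11)$, $M_{11}$, $M_{23}$) are easy because their point stabilizers visibly fail to have a nontrivial normal Sylow $p$-subgroup (except $A_5$, $S_5$, which one checks directly). Without carrying out at least the $\PSL(n,r)$ analysis, your proposal is a proof outline rather than a proof.
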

\begin{proof}
Let $G$ be a minimal counterexample with $P\in\Syl_p(G)$ redundant. 
As in the proof of \autoref{p2}, we may assume that $G$ is a transitive permutation group of prime degree $q:=|\Syl_p(G)|$. 
By a result of Burnside, $G$ is a subgroup of the affine group $C_q\rtimes C_{q-1}$ or a $2$-transitive almost simple group (see \cite[Corollary~3.5B and Theorem~4.1B]{DM}). The first case is impossible, since $P$ must be non-cyclic. The latter case can be investigated with the classification of the finite simple groups (see \cite[p. 99]{DM} or \cite{Guralnickpprim}).
More precisely, the socle $N$ of $G$ is one of the following simple groups:
\begin{enumerate}[(i)]
\item $N=A_q$. Since the stabilizer $A_{q-1}$ must have a normal Sylow $p$-subgroup, it follows that $q=5$ and $p=2$. 
By \autoref{p2}, neither $G=A_5$ nor $G=S_5$ has a redundant Sylow $2$-subgroup. 

\item $N=\PSL(2,11)$ with $q=11$. Here $|G:N|\le 2$ and the stabilizer is isomorphic to $A_5$, so it cannot have a normal Sylow $p$-subgroup.

\item $N=M_{11}=G$ with $q=11$. Again the stabilizer $M_{10}$ has no normal Sylow $p$-subgroup.

\item $N=M_{23}=G$ with $q=23$. Here the stabilizer $M_{22}$ is simple.

\item $N=\PSL(n,r)$ with $q=\frac{r^n-1}{r-1}$ where $n$ is a prime. Suppose that $n\mid r-1$. Then $q=1+r+\ldots+r^{n-1}\equiv n\equiv 0\pmod{n}$ and $q=n$. But this contradicts $q>r-1$. Hence, $\gcd(n,r-1)=1$ and $N=\SL(n,r)$. 
Here $G$ acts on the set of lines or hyperplanes of $\FF_r^n$. In both cases the stabilizer, say $N_v$ contains a copy of $\GL(n-1,r)$. If $n>2$, then $|\GL(n-1,r)|$ is divisible by $r\frac{r^{n-1}-1}{r-1}=q-1$. In particular, $N_v$ has a non-trivial Sylow $p$-subgroup, which cannot be normal since $\GL(n-1,r)$ is involved in $N_v$. Consequently, $n=2$ and $q=r+1$ is a Fermat prime. Now $G/N$ is a cyclic $2$-group. For $p>2$ it is well-known that the Sylow $p$-subgroup of $N$ and $G$ are cyclic (see \cite[8.6.9]{Kurzweil}). Hence, $p=2$ and $G=PN$. The upper unitriangular matrices constitute a Sylow $2$-subgroup $Q\le P$ of $N$. We consider $x:=\begin{sm}
1&1\\0&1
\end{sm}\in Q$. It is easy to see that $\C_N(x)=Q$. In particular, $Q$ is the only Sylow $2$-subgroup of $N$ containing $x$. Since $\N_G(P)\le\N_G(P\cap N)=\N_G(Q)$ and $\nu_p(G)=q$ is a prime, we have 
\[\nu_p(N)=|N:\N_N(Q)|=|N:N\cap\N_G(Q)|=|N\N_G(Q):\N_G(Q)|\bigm||G:\N_G(P)|=\nu_p(G)\]
and $\nu_p(N)=\nu_p(G)$.
Therefore, $P$ is the only Sylow $2$-subgroup of $G$ containing $Q$ and $x$. Thus, $P$ is not redundant and we derived a contradiction. \qedhere
\end{enumerate}
\end{proof} 

Now we consider $p$-solvable groups.
For $H\le P\in\Syl_p(G)$ let $\lambda_G(H)$ be the number of Sylow $p$-subgroups of $G$ containing $H$. The following result was proved using Wielandt's subnormalizers.

\begin{Lem}[Casolo]\label{casolo}
Let $G$ be a $p$-solvable group and $H\le P\in\Syl_p(G)$. Let $\mathcal{M}$ be the set of $p'$-quotients in a normal series of $G$ whose quotients are $p$-groups or $p'$-groups. Then
\[\lambda_G(H)|\N_G(P):P|=\prod_{Q\in\mathcal{M}}|\C_Q(H)|.\]
\end{Lem}
\begin{proof}
See Theorems~2.6 and 2.8 in \cite{Casolo}.
\end{proof}

\begin{Thm}\label{psolv}
Let $G$ be a $p$-solvable group with a redundant Sylow $p$-subgroup. Then $\nu_p(G)\ge q^{p+1}$, where $q>1$ is the smallest prime power congruent to $1$ modulo $p$.
\end{Thm}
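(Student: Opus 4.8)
The plan is to translate the hypothesis into a statement about coprime modules via Casolo's theorem and then to prove a purely representation-theoretic lower bound. First I would reduce to the faithful case: if $N$ denotes the intersection of all Sylow $p$-normalizers (the kernel of the conjugation action of $G$ on $\Syl_p(G)$), then $N$ normalizes $P$, so $P\trianglelefteq PN$ is the unique Sylow $p$-subgroup of $PN$; hence $\nu_p(G)=\nu_p(G/N)$ and $P$ is redundant in $G$ iff $PN/N$ is redundant in $G/N$. Since $G/N$ is again $p$-solvable we may assume $N=1$. Now apply \autoref{casolo} to a chief series of $G$, so that $\mathcal M$ is the set of $p'$-chief factors, each an elementary abelian $P$-module. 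Taking $H=1$ and $H=P$ in the formula (and using $\lambda_G(P)=1$) gives $\nu_p(G)=\prod_{Q\in\mathcal M}|Q:\C_Q(P)|$, while taking $H=\langle x\rangle$ turns redundancy, i.e.\ $\lambda_G(\langle x\rangle)\ge 2$ for all $x\in P$, into $\prod_{Q\in\mathcal M}|\C_Q(x):\C_Q(P)|\ge 2$.

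Next I would pass to modules. For each $Q\in\mathcal M$ the coprime action gives $Q=\C_Q(P)\oplus[Q,P]$, and writing $V_Q:=[Q,P]$ we have $|Q:\C_Q(P)|=|V_Q|$ and $|\C_Q(x):\C_Q(P)|=|\C_{V_Q}(x)|$. Hence, setting $V:=\bigoplus_{Q}V_Q$, we obtain $\nu_p(G)=|V|$, $\C_V(P)=0$, and the redundancy condition becomes the clean statement that $\C_V(x)\ne 0$ for every $x\in P$. Decomposing $V$ into nontrivial irreducible summands $W_1,\ldots,W_s$ (Maschke), it remains to prove the module inequality $\prod_{j}|W_j|\ge q^{p+1}$. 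Each factor is already bounded below by $q$: a nontrivial irreducible $\FF_{q_0}P$-module restricts nontrivially to some $C_p\le P/\ker$, hence contains a nontrivial irreducible $\FF_{q_0}C_p$-module, whose dimension equals the multiplicative order of $q_0$ modulo $p$; thus $|W_j|\ge q_0^{\,\ord_p(q_0)}\ge q$, where $q=\min_{q_0\ne p}q_0^{\,\ord_p(q_0)}$ is precisely the smallest prime power congruent to $1$ modulo $p$.

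The heart of the proof, and the step I expect to be the main obstacle, is to upgrade the bound $|W_j|\ge q$ to the product bound $q^{p+1}$. The available combinatorial input is that the fixed-point sets $S_j:=\{x\in P:\C_{W_j}(x)\ne 0\}$ cover $P$, together with the fact that a noncyclic $p$-group cannot be covered by fewer than $p+1$ proper subgroups. The naive hope that $\bigcup_j S_j=P$ forces $s\ge p+1$ distinct proper subgroups fails on two counts: $S_j$ need not be a subgroup (for the two-dimensional module of $D_8$ the set $S_j$ already generates $P$), and covering only the lines of $P/\Phi(P)$ is insufficient, since the elements of $\Phi(P)$ must be covered as well (the faithful module of an extraspecial group covers every line yet fixes no nonzero vector of its center). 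The correct statement charges each summand a factor $q^{e_j}$, where $e_j$ records the number of independent covering directions contributed by $S_j$, and one must show $\sum_j e_j\ge p+1$. I would establish this by induction on $|P|$: choosing a central element $z$ of order $p$ and splitting $V=\C_V(z)\oplus[V,z]$ into $P$-submodules, the commutator part contributes a factor $|[V,z]|\ge q$ whenever $z$ acts nontrivially, and one recurses on the residual covering problem for the quotient, handling separately the cases in which $P/\langle z\rangle$ degenerates to a cyclic group. Controlling this bookkeeping so that the exponent comes out as exactly $p+1$ (and not less) is the delicate point, and is where the noncyclicity of $P$ is used in full.
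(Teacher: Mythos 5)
Your reduction is correct as far as it goes, and it is a genuine alternative to the paper's route: the paper builds $\tilde G=\bigl(\bigtimes_{Q\in\mathcal M}Q\bigr)\rtimes P$ from Casolo's formula and then invokes Hartley--Turull to replace the $p'$-core by an abelian group, whereas you pass directly to the elementary abelian chief factors and their commutator parts $[Q,P]$. Everything up to the target statement -- a coprime $P$-module $V$ with $\C_V(P)=0$ and $\C_V(x)\ne 0$ for all $x\in P$ must satisfy $|V|\ge q^{p+1}$ -- checks out, as does the bound $|W_j|\ge q$ for each nontrivial irreducible summand.

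However, that target inequality is the heart of the theorem, and you do not prove it; you correctly identify why the naive count fails and then offer only a sketch whose mechanism is broken. Take $P=C_p\times C_p$ and $V=V_1\oplus\cdots\oplus V_{p+1}$ the sum of the $p+1$ one-dimensional modules with pairwise distinct maximal kernels: for a central $z$ of order $p$, $\C_V(z)$ is a single summand of order $q$, so the recursion on $P/\langle z\rangle$ can contribute at most a factor $q$ while you need $q^{p}$ from it, and $[V,z]$ has order $q^{p}$ rather than the $q$ you charge it. Moreover the recursive hypothesis $\C_{\C_V(z)}(x)\ne 0$ does not follow from $\C_V(x)\ne 0$, since the fixed vector of $x$ may lie entirely in $[V,z]$. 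The missing idea (the paper's) is to cover $P$ by maximal subgroups $P_1,\ldots,P_{p+1}$ and work with $N_i:=\C_V(P_i)$ rather than with irreducible summands: each $N_i$ is nonzero (if $\C_V(P_i)=0$ one restricts to $P_i$ and inducts, the cyclic case being impossible because then $\C_V(P_i)=\C_V(x)\ne 0$ for a generator $x$), and choosing $x\in P_i\setminus\bigcup_{j\ne i}P_j$ one gets $P=\langle x\rangle P_j$, hence $\C_{N_j}(x)=0$ and $N_j=[x,N_j]\le[x,V]$ for all $j\ne i$, while $N_i\le\C_V(x)$; the Fitting decomposition $V=\C_V(x)\oplus[x,V]$ then forces $N_i\cap\sum_{j\ne i}N_j=0$, so $N_1\oplus\cdots\oplus N_{p+1}\le V$ and $|V|\ge q^{p+1}$ since $P/P_i\cong C_p$ acts nontrivially on each $N_i$. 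Without this (or an equivalent) independence argument your proof does not close.
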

\begin{proof}
Let $P\in\Syl_p(G)$ and $\mathcal{M}$ as in \autoref{casolo}. Choosing $H=P$ in \autoref{casolo} yields
\[|\N_G(P):P|=\prod_{Q\in\mathcal{M}}|\C_Q(P)|.\]
Let $N:=\bigtimes_{Q\in\mathcal{M}}Q$ and $\tilde{G}:=N\rtimes P$. Then $\nu_p(G)=|G:\N_G(P)|=|\tilde{G}:\N_{\tilde{G}}(P)|=\nu_p(\tilde{G})$. Now let $H\le P$ be a cyclic subgroup. Since $P$ is redundant in $G$, we have $\lambda_G(H)>1$. In this situation \autoref{casolo} shows that $\lambda_{\tilde{G}}(H)>1$. Hence, $P$ is redundant in $\tilde{G}$ and we may assume that $G=\tilde{G}$ is $p$-nilpotent and $N=\pcore_{p'}(G)$.
Then $\C_N(x)>\C_N(P)$ for all $x\in P$. We consider $N$ as a $P$-set via the conjugation action.
By a theorem of Hartley--Turull~\cite{HartleyTurull} (see also \cite[Theorem~3.31]{IsaacsGroup}), there exists an abelian group $A$ and an isomorphism of $P$-sets $\phi\colon N\to A$, i.\,e. $\phi(n^x)=\phi(n)^x$ for all $x\in P$ and $n\in N$. 
In particular, $\C_A(x)=\phi(\C_N(x))>\phi(\C_N(P))=\C_A(P)$. 
Hence, $P$ is redundant in $A\rtimes P$ and 
\[\nu_p(A\rtimes P)=|A:\C_A(P)|=|N:\C_N(P)|=\nu_p(G).\]
Thus, we may assume that $N=A$ is abelian. Then $\C_N(P)\unlhd G$. Going over to $G/\C_N(P)$, we may assume that $\C_N(P)=1$. Let $P_1,\ldots,P_{p+1}\le P$ be maximal subgroups of $P$ such that $P=P_1\cup\ldots\cup P_{p+1}$. If $\C_N(P_i)=1$ for some $i$, then $P_i$ is redundant in $P_iN$ and $\nu_p(P_iN)=|N|=\nu_p(G)$. Arguing by induction on $|G|$, we can assume that $N_i:=\C_N(P_i)>1$ for $i=1,\ldots,p+1$. 
Using the Fitting decomposition as in the proof of \autoref{main}\eqref{c}, we obtain $N_1\times\ldots\times N_{p+1}\le N$. Since $P$ acts non-trivially on each $N_i$, it is clear that $|N_i|\ge q$. In total, $|N|\ge q^{p+1}$. 
\end{proof}

We remark that $G:=\PSL(2,11)$ has a redundant Sylow $2$-subgroup by \cite[Theorem~D]{MMM} and $\nu_2(G)=55$ is a product of only two primes. This indicates that \autoref{psolv} may not hold for arbitrary groups. 

For $x\in P\in\Syl_p(G)$ let $\lambda_G(x)=\lambda_G(\langle x\rangle)$. Gheri~\cite{Gheri} has introduced the following condition:
\begin{equation}\label{gheri}
\nu_p(G)^{|P|/p}\ge\prod_{x\in P}\lambda_G(x).
\end{equation}
He has shown in \cite[Theorem~B]{Gheri} that \eqref{gheri} holds for all finite groups if and only it holds for all almost simple groups. No counterexamples are known to exist.
This yields a conjectural bound for $\nu_p(G)$. 

\begin{Thm}\label{conj}
Suppose that $G$ has a redundant Sylow $p$-subgroup of order $p^n$. If $G$ satisfies \eqref{gheri}, then $\nu_p(G)\ge (p+1)^{(p^n-1)/(p^{n-1}-1)}>(p+1)^p$.
\end{Thm}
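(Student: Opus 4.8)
The plan is to feed the redundancy hypothesis into inequality \eqref{gheri} to force a lower bound on $\nu_p(G)$. First I would fix $P\in\Syl_p(G)$ and observe that redundancy means every $p$-element lies in some Sylow subgroup other than $P$; in particular, for each $x\in P$ the element $x$ is covered by a Sylow subgroup different from $P$, so $\lambda_G(x)\ge 2$ whenever $x\ne 1$. The identity element lies in every Sylow $p$-subgroup, so $\lambda_G(1)=\nu_p(G)$. The idea is that these pointwise lower bounds on the $\lambda_G(x)$, when multiplied together in the right-hand side of \eqref{gheri}, already outrun the left-hand side unless $\nu_p(G)$ is large.

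Plugging the crude bound $\lambda_G(x)\ge 2$ for all $x\ne 1$ into \eqref{gheri} gives $\nu_p(G)^{|P|/p}\ge \nu_p(G)\cdot 2^{|P|-1}$, i.e. $\nu_p(G)^{|P|/p-1}\ge 2^{|P|-1}$, which is too weak to yield $(p+1)^p$. So the key step is to sharpen the pointwise bound: by \autoref{lem} the covering of $P$ needs at least $p+1$ distinct Sylow subgroups, and more to the point, for a nontrivial $x$ the number $\lambda_G(x)$ of Sylow subgroups through $x$ should be bounded below by something of order $p+1$ rather than $2$. The natural mechanism is the orbit-counting trick already used in \autoref{lem}: if $x$ lies in $P\cap P'$ for some $P'\ne P$, then conjugating by a suitable element of $\N_P(\langle x\rangle)$ produces $p$ further Sylow subgroups all containing $x$, together with $P$ itself, giving $\lambda_G(x)\ge p+1$ for every $x\ne 1$.

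With $\lambda_G(x)\ge p+1$ for all nontrivial $x$ and $\lambda_G(1)=\nu_p(G)$, inequality \eqref{gheri} becomes
\[
\nu_p(G)^{|P|/p}\ge \nu_p(G)\,(p+1)^{|P|-1},
\]
hence $\nu_p(G)^{|P|/p-1}\ge (p+1)^{|P|-1}$, and so $\nu_p(G)\ge (p+1)^{(|P|-1)/(|P|/p-1)}=(p+1)^{p(|P|-1)/(|P|-p)}$. Since $P$ is non-cyclic we have $|P|\ge p^2$, and the exponent $p(|P|-1)/(|P|-p)$ is strictly greater than $p$ for all such $|P|$ (it decreases to $p$ as $|P|\to\infty$ but never reaches it), which yields the strict bound $\nu_p(G)>(p+1)^p$.

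The main obstacle I anticipate is justifying the uniform bound $\lambda_G(x)\ge p+1$ for a single nontrivial $x$, since the clean argument in \autoref{lem} bounds the \emph{total} number of covering subgroups rather than the count through a fixed element. One must choose $x$ lying in an intersection $P\cap P_i$ that is normalized by an element $g\in\N_P(P\cap P_i)\setminus\N_G(P_i)$, so that the conjugates $P_i^{g^j}$ are distinct and each still contains $x$; checking that such an $x$ and $g$ exist for the covering data, and that this holds for \emph{every} nontrivial $x$ simultaneously (not just one), is the delicate point. If a uniform $\lambda_G(x)\ge p+1$ cannot be secured for all $x$, a fallback is to bound the product $\prod_{x}\lambda_G(x)$ below using only that sufficiently many of the $x$ achieve $\lambda_G(x)\ge p+1$, which would still suffice provided the deficient elements are few enough.
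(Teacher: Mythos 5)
Your proposal is correct and follows essentially the same route as the paper: establish $\lambda_G(x)\ge p+1$ for every nontrivial $x\in P$ via the normalizer trick from \autoref{lem} (taking $g\in\N_P(P\cap Q)\setminus Q$ for a second Sylow subgroup $Q$ containing $x$, which exists by redundancy), then feed this into \eqref{gheri}. The ``delicate point'' you flag is not actually an issue, since the argument is applied to each $x$ separately with its own $Q$ and $g$, exactly as the paper does.
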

\begin{proof}
Let $x\in P\in\Syl_p(G)$. Since $P$ is redundant, there exists a Sylow $p$-subgroup $Q\ne P$ such that $x\in P\cap Q$. As in the proof of \autoref{lem}, we may choose $g\in\N_P(P\cap Q)\setminus Q$ such that $Q^g,Q^{g^2},\ldots,Q^{g^p}$ are distinct Sylow $p$-subgroups containing $x$. Hence, $\lambda_G(x)\ge p+1$. Moreover, $\lambda_G(1)=\nu_p(G)$. Now \eqref{gheri} implies
\[\nu_p(G)^{p^{n-1}}\ge\lambda_G(1)\prod_{x\in P\setminus\{1\}}\lambda_G(x)\ge\nu_p(G)(p+1)^{p^n-1}.\]
Since $P$ is non-cyclic, $n\ge 2$ and $\frac{p^n-1}{p^{n-1}-1}>p$. 
\end{proof}

If $n=2$ in \autoref{conj}, then $\nu_p(G)\ge (p+1)^{p+1}$. This coincides with \autoref{psolv}, whenever, $p$ is a Mersenne prime or $p=2$. The proof of \cite[Theorem~B]{Gheri} reduces \eqref{gheri} to an almost simple group $S$ such that $\nu_p(S)\le\nu_p(G)$. Then $S$ is a primitive permutation group of degree $\le\nu_p(S)$. If $\nu_p(G)$ is small, say $\nu_p(G)<2^{12}$, we can check \eqref{gheri} by running through the library of primitive groups in GAP~\cite{GAPnew}. 
We did not find examples among non-solvable groups improving the values in \autoref{refine}. 

Finally, we answer \cite[Question~8.8]{MMM}.

\begin{Thm}\label{Q88}
For every $n\in\NN$ there exists a constant $\delta_n<1$ with the following property: For every set of Sylow $p$-subgroups $P_1,\ldots,P_n$ of a finite group $G$ we have $G_p=P_1\cup\ldots\cup P_n$ or
\[|P_1\cup\ldots\cup P_n|<\delta_n|G_p|.\]
\end{Thm}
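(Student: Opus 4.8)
The plan is to reduce the statement to a density estimate for coverings of a single $p$-group by few proper subgroups. I may assume the $P_i$ are pairwise distinct, and I suppose that $U:=P_1\cup\dots\cup P_n\ne G_p$. Choose a $p$-element $x\in G_p\setminus U$ and a Sylow $p$-subgroup $Q\ni x$; since $x\notin P_i$ we have $Q\notin\{P_1,\dots,P_n\}$. As all Sylow $p$-subgroups share the common order $|P|=|Q|$, we get $|U|\le n|P|=n|Q|$ and
\[|G_p|\ge|U\cup Q|=|U|+|Q\setminus U|.\]
Hence it suffices to bound $|Q\setminus U|$ below by a fixed fraction of $|Q|$: if $|Q\setminus U|\ge\gamma_n|Q|$ for a constant $\gamma_n>0$ depending only on $n$, then $|G_p|\ge|U|+\gamma_n|Q|\ge|U|(1+\gamma_n/n)$, so that $|U|\le\tfrac{n}{n+\gamma_n}|G_p|$ and any $\delta_n\in(\tfrac{n}{n+\gamma_n},1)$ works. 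Now $Q\cap U=\bigcup_{i=1}^n(Q\cap P_i)$ is a union of $n$ \emph{proper} subgroups of the $p$-group $Q$ (proper because $Q\ne P_i$) which does not cover $Q$, since $x\in Q\setminus(Q\cap U)$.

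So everything comes down to the following estimate, which I would isolate as a lemma: there is a constant $\gamma_n>0$ such that whenever a finite $p$-group $Q$ is not covered by proper subgroups $H_1,\dots,H_n$, then $|Q\setminus\bigcup_iH_i|\ge\gamma_n|Q|$. Two clean ingredients feed into this. First, a coset trick: if $x\notin\bigcup_iH_i$ and $D:=\bigcap_iH_i$, then for every $h\in D$ one has $xh\notin H_j$ for all $j$ (otherwise $x=(xh)h^{-1}\in H_j$), so the whole coset $xD$ avoids $\bigcup_iH_i$; together with the inequality $|Q:D|\le\prod_i|Q:H_i|$ this yields $|Q\setminus\bigcup_iH_i|\ge|D|\ge|Q|/\prod_i|Q:H_i|$, which is strong when the indices $|Q:H_i|$ are small. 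Second, the trivial union bound $|Q\setminus\bigcup_iH_i|\ge|Q|(1-\sum_i|Q:H_i|^{-1})$, which is strong when the indices are large; since every index is at least $p$, it already settles all primes $p>n$, giving $|Q\setminus\bigcup_iH_i|\ge|Q|/(n+1)$ there.

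This leaves the finitely many small primes $p\le n$. Here I would first treat the elementary abelian case $Q=\FF_p^r$ cleanly: enlarge each $H_i$ to a hyperplane $\ker\phi_i$ still avoiding $x$ (possible since $x\notin H_i$), and pass to the quotient by $\bigcap_i\ker\phi_i$. This lands in $\FF_p^s$ with $s=\dim\langle\phi_1,\dots,\phi_n\rangle\le n$, where the uncovered set is nonempty, so its density is at least $p^{-s}\ge n^{-n}$. The general $p$-group case I would reduce to this via the Frattini quotient $\overline{Q}=Q/\Phi(Q)$: if the images $\overline{H_i}$ do not cover $\overline{Q}$, then the covered fraction of $Q$ does not exceed that of $\overline{Q}$, and we are done by the elementary abelian bound.

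The main obstacle is precisely the remaining Frattini subcase, where the proper subspaces $\overline{H_i}$ already cover $\overline{Q}$ although the $H_i$ themselves do not cover $Q$. A naive induction on $|Q|$ through a maximal subgroup loses a factor of $p$ at each step and therefore fails to produce a bound uniform in $|Q|$ (let alone in $p$); reconciling the small-index and large-index regimes so that the density stays bounded away from $0$ as both $p$ and $|Q|$ grow is the crux. I expect the cleanest way through is a fibrewise analysis over $\overline{Q}$ combining the two ingredients above: one controls the subgroups $H_i\cap\Phi(Q)$ according to whether their indices in $\Phi(Q)$ are small (apply the coset trick) or large (apply the union bound), so that in either regime a positive proportion of the fibres retains an uncovered coset, yielding the uniform constant $\gamma_n$.
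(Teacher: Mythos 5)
Your reduction to a single Sylow subgroup is correct, and it is actually more direct than the paper's own argument: the paper invokes Frobenius's theorem ($|G_p|=a|P|$ with $a\ge 2$) and runs an induction on $n$ with an auxiliary parameter $\rho_n$, whereas your inequality $|G_p|\ge |U|+|Q\setminus U|\ge |U|(1+\gamma_n/n)$ dispenses with both and yields $\delta_n$ in one line. The problem is the lemma you isolate as the remaining task: that a finite $p$-group $Q$ which is not covered by $n$ proper subgroups $H_1,\dots,H_n$ has uncovered density at least some $\gamma_n>0$ independent of $p$ and $|Q|$. This is exactly the external input on which the paper's proof rests: it quotes \cite[Theorem~1]{SambaleUnion} (a separate published paper on the size of coset unions) to get $|(P\cap P_1)\cup\dots\cup(P\cap P_n)|\le\gamma_n|P|$. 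It is a genuine theorem, not a routine estimate, and your proposal proves it only when $p>n$ (union bound) and when $Q$ is elementary abelian; you explicitly leave the remaining case as an expectation.

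That remaining case is where the real difficulty lives, and the sketched ``fibrewise analysis'' does not obviously close it. Concretely: split the $H_i$ into those of index at most $n$ and the rest, and let $D$ be the intersection of the small-index ones, so the coset $xD$ avoids all of them and $|Q:D|\le n^n$. A large-index $H_j$ meets $xD$, when it meets it at all, in a coset of $D\cap H_j$ inside $xD$, and $|D:D\cap H_j|=|Q:H_j|/|Q:DH_j|$ can be as small as $p$ even though $|Q:H_j|$ is enormous; so for $p\le n$ the union bound inside $xD$ fails, and you are facing the non-covering density problem for $n$ \emph{cosets} in $D$ --- essentially the cited theorem itself. Likewise your Frattini reduction stalls precisely when the images $\overline{H_i}$ already cover $Q/\Phi(Q)$, and a naive descent loses a factor of $p$ per step, as you note. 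So the crux you flag is a genuine gap, not a technicality. Everything else --- the deduction of $\delta_n=n/(n+\gamma_n)$, the coset trick, the case $p>n$, the elementary abelian case --- is sound; if you are permitted to quote the coset-union theorem as the paper does, your argument becomes a complete proof, and a slightly cleaner one.
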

\begin{proof}
We assume that $G_p\ne P_1\cup\ldots\cup P_n$ and argue by induction on $n$. Let $P\in\Syl_p(G)\setminus\{P_1,\ldots,P_n\}$. A well-known theorem of Frobenius asserts that $|G_p|=a|P|$ for some integer $a\ge 2$ (see e.\,g. \cite{IsaacsRobinson}). If $n=1$, then the claim holds with $\delta_1=\frac{1}{2}$. Now let $n\ge 2$ and assume that $\delta_{n-1}$ is already given. Let $\rho_n$ be the smallest positive integer such that $\delta_{n-1}+\frac{1}{\rho_n}<1$. If $a\ge \rho_n$, then induction yields
\[|P_1\cup\ldots\cup P_n|\le|P_1\cup\ldots\cup P_{n-1}|+|P|\le \delta_{n-1}|G_p|+\frac{1}{a}|G_p|\le\Bigl(\delta_{n-1}+\frac{1}{\rho_n}\Bigr)|G_p|.\]
Now suppose that $a\le\rho_n$. We may assume that $P\nsubseteq P_1\cup\ldots\cup P_n$. Hence, by \cite[Theorem~1]{SambaleUnion}, there exists a constant $\gamma_n<1$ such that 
\begin{align*}
|P_1\cup\ldots\cup P_n|&\le |G_p\setminus P|+|(P\cap P_1)\cup\ldots\cup(P\cap P_n)|\le\frac{a-1}{a}|G_p|+\gamma_n|P|\\
&=\Bigl(1-\frac{1-\gamma_n}{a}\Bigr)|G_p|\le\Bigl(1-\frac{1-\gamma_n}{\rho_n}\Bigr)|G_p|.
\end{align*}
Finally, the claim holds with 
\[\delta_n:=\max\Bigl\{\delta_{n-1}+\frac{1}{\rho_n},\ 1-\frac{1-\gamma_n}{\rho_n}\Bigr\}.\qedhere\]
\end{proof}


\begin{thebibliography}{1}

\bibitem{Casolo}
C. Casolo, \textit{Subnormalizers in finite groups}, Comm. Algebra \textbf{18}
  (1990), 3791--3818.
  
\bibitem{DM}
J.~D. Dixon and B. Mortimer, \textit{Permutation groups}, Graduate Texts in
  Mathematics, Vol. 163, Springer-Verlag, New York, 1996.

\bibitem{GAPnew}
The GAP~Group, \textit{GAP -- Groups, Algorithms, and Programming, Version
  4.12.2}; 2022, (\url{http://www.gap-system.org}).

\bibitem{Gheri}
P. Gheri, \textit{On the number of {$p$}-elements in a finite group}, Ann. Mat.
  Pura Appl. (4) \textbf{200} (2021), 1231--1243.

\bibitem{Guralnickpprim}
R.~M. Guralnick, \textit{Subgroups of prime power index in a simple group}, J.
  Algebra \textbf{81} (1983), 304--311.


\bibitem{HartleyTurull}
B. Hartley and A. Turull, \textit{On characters of coprime operator groups and
  the {G}lauberman character correspondence}, J. Reine Angew. Math.
  \textbf{451} (1994), 175--219.

\bibitem{IsaacsGroup}
I.~M. Isaacs, \textit{Finite group theory}, Graduate Studies in Mathematics,
  Vol. 92, American Mathematical Society, Providence, RI, 2008.

\bibitem{IsaacsRobinson}
I.~M. Isaacs and G.~R. Robinson, \textit{On a theorem of {F}robenius: solutions
  of {$x^n=1$} in finite groups}, Amer. Math. Monthly \textbf{99} (1992),
  352--354.

\bibitem{Korhonen}
M. Korhonen, \textit{If $p$ is an odd prime, does every Sylow $p$-subgroup
  contain an element not in any other Sylow $p$-subgroup?},
  \href{https://math.stackexchange.com/questions/299682/if-p-is-an-odd-prime-does-every-sylow-p-subgroup-contain-an-element-not-in}{https://math.stackexchange.com/questions/299682}.
  
\bibitem{Kurzweil}
H. Kurzweil and B. Stellmacher, \textit{The theory of finite groups},
  Universitext, Springer-Verlag, New York, 2004.

\bibitem{MMM}
A. Mar\'{o}ti, J. Mart\'{\i}nez and A. Moret\'{o}, \textit{Covering the set of
  {$p$}-elements in finite groups by {S}ylow {$p$}-subgroups}, J. Algebra
  \textbf{638} (2024), 840--861.

\bibitem{SambaleUnion}
B. Sambale and M. T\u{a}rn\u{a}uceanu, \textit{On the size of coset unions}, J.
  Algebraic Combin. \textbf{55} (2022), 979--987.
  
\end{thebibliography}
\end{document}